\theoremstyle{plain}
\newtheorem{teor}{Theorem}[]
\newtheorem{proposition}[teor]{Proposition}
\theoremstyle{definition}
\newtheorem*{remark*}{Remark}
\begin{document}


\title[Bernoulli and Euler numbers from divergent series]{Bernoulli and Euler numbers from divergent series}

\author{Sergio A. Carrillo}
\address{(Sergio A. Carrillo) Fakult\"at f\"ur Mathematik, Universit\"at Wien, Oskar-Morgenstern-Platz~1, A-1090 Wien, Austria. Escuela de Ciencias Exactas e Ingenier\'{i}a, Universidad Sergio Arboleda, Calle 74 14-14, Bogot\'{a}, Colombia.}

\email{sergio.carrillo@univie.ac.at, sergio.carrillo@usa.edu.co}

\thanks{The author was supported by the Austrian FWF-Project P 26735-N25.}

\subjclass[2010]{Primary 11B68, 40G10.}



\maketitle

Many special numeric sequences have been studied intensively due to their appearance and applications in Combinatorics, Number Theory, and Analysis. For instance, Fibonacci, Bernoulli, Euler, Eulerian \cite{Petersen}, or Stirling numbers \cite{Stanley}. In addition to common techniques to obtain properties and relations among them, summation of divergent series can be used too!

The Bernoulli and Euler numbers are two sequences of rational numbers that play an important role in mathematical analysis. They appear naturally as the coefficients of the Taylor expansions of trigonometric functions and in the computation of sums of series and asymptotic expansions. For instance, in the calculation of $\zeta(2k)$, where $k$ is an integer and $\zeta$ denotes the Riemann zeta function \cite{Osler}, or in the Euler-Maclaurin summation formula \cite{Hardy} Chapter XIII. They also exhibit interesting relations between other numbers, for example with Euler's constant \cite{Lih}. Bernoulli numbers have been called an ``unifying force'' in mathematics due to their presence in several branches as in Analytic Number Theory \cite{BN} and Differential Topology \cite{Mazur}.

The \textit{Bernoulli numbers} $B_n$ (with signs) were introduced by J. Bernoulli in his \textit{Ars Conjectandi} \cite{Bernoulli,Sylla}, published posthumously in 1713, to give a precise formula for $1^k+2^k+\cdots+n^k$ as a polynomial in $n$. They are defined as the coefficients in the Taylor expansion at $z=0$ of \begin{equation}\label{z/(ez-1)+z/2}\frac{z}{e^z-1}=-\frac{z}{2}+\frac{z}{2}\frac{e^z+1}{e^z-1}=\sum_{k=0}^\infty \frac{B_k}{k!}z^k,\quad |z|<2\pi.\end{equation} From the  definition it is deduced that $B_0=1$, $B_1=-\frac{1}{2}$, $B_2=\frac{1}{6}$ and $\sum_{k=0}^{n-1}{n\choose k} B_k=0,$ $n\geq 2$. This formula allows to find $B_n$ recursively and it shows that $B_n$ is always rational. Also, since $\frac{z}{2}\frac{e^z+1}{e^z-1}$ is an even function, we deduce that $B_{2k+1}=0$, for $k\geq1$.

The \textit{Euler numbers} are defined as the Taylor expansion at $z=0$ of \begin{equation}\label{E_n def}\sec(z)=\frac{1}{\cos(z)}=\sum_{n=0}^\infty \frac{E_{n}}{n!}z^{n}=1+\sum_{n=1}^\infty \frac{E_{2n}}{(2n)!}z^{2n},\quad |z|<\frac{\pi}{2}.\end{equation} Then it is clear that $\sum_{k=0}^n {2n\choose 2k} (-1)^{k} E_{2k}=0,$ and thus the  $E_{2k}$ are integers.

Replacing $z$ by $2iz$ in equation (\ref{z/(ez-1)+z/2}) we find the Taylor expansion at $z=0$ of \begin{equation*}\label{Taylor expantion of cot}z\cot(z)=\sum_{k=0}^\infty \frac{B_{2k}}{(2k)!}(-1)^k 2^{2k}z^{2k},\quad |z|<\pi,\end{equation*} and from the identity $\tan(z)=\cot(z)-2\cot(2z)$, we see that \begin{equation*}\label{Taylor tan}\tan(z)=\sum_{n=1}^\infty (-1)^{n+1}\frac{2^{2n}(2^{2n}-1)B_{2n}}{(2n)!}z^{2n-1}=-2i\sum_{k=1}^\infty \frac{2^{k+1}-1}{k+1}B_{k+1}\frac{(2iz)^k}{k!},\quad |z|<\frac{\pi}{2}.\end{equation*} Thus the Maclaurin series of $\tan(z)$ and $\sec(z)$ are not trivial but there are elementary recursive methods to obtain them \cite{Lawson}. Finally, we obtain from the last equation the power series expansion \begin{equation}\label{Last formula}  -\frac{1}{1+e^z}=\sum_{k=0}^\infty \frac{2^{k+1}-1}{k+1}B_{k+1}\frac{z^k}{k!},\quad |z|<\pi.\end{equation} 

There are many lists of recurrences satisfied by the Bernoulli and Euler numbers, e.g., Nielsen's classical book \cite{Nielsen}. We only need one, namely
\begin{equation}\label{Prop B_n for 1^k-2^k+...}
\frac{2^{k+1}-1}{k+1}B_{k+1}=\frac{1}{2}-\sum_{l=1}^{k}{k\choose l}\frac{2^{l+1}-1}{l+1}B_{l+1},\hspace{0.4cm} k\geq1.
\end{equation} It can be obtained from the equality  $\frac{-1}{1+e^z}e^z=-1+\frac{1}{1+e^z}$ and the formula (\ref{Last formula}) by equating the corresponding coefficients of $z^k$. 

We will show how to obtain (\ref{Prop B_n for 1^k-2^k+...}) also by summing divergent series. The use of this method is not new. Garabedian  \cite{Garabedian}, for instance,  showed that \begin{equation*}\label{Explicit Bn}B_{n+1}=\frac{(-1)^n(n+1)}{2^{n+1}-1}\sum_{k=1}^{n+1} \frac{1}{2^k}\sum_{j=0}^{k-1} {k-1\choose j}(-1)^j(j+1)^n,\end{equation*} from summing $$\sigma_k:=1^k-2^k+3^k-4^k+\cdots,\quad k\in\mathbb{N},$$ using Ces\`{a}ro and Abel summability. R\c{a}dkowsk \cite{Ruso} also provided a proof using calculus of finite differences. Similarly, Namias \cite{Namias} deduced some other recurrences using Stirling's asymptotic series and the duplication formula for the Gamma function, although the results can be also obtained in an elementary way \cite{Deeba}. We will use the sum of $\sigma_k$ and the linearity of a summation method that can sum it to obtain (\ref{Prop B_n for 1^k-2^k+...}) and some other simple recurrences for Bernoulli and Euler numbers.

Let us recall that a series $\sigma=\sum_{n=0}^\infty a_n$ is said to be \textit{Abel summable} with sum $A(\sigma)$ if for all $x\in\mathbb{R}$ with $0\leq x<1$, the \textit{associated generating series} $\sum_{n=0}^\infty a_n x^n$ is convergent and $A(\sigma):=\lim_{x\rightarrow 1^-} \sum_{n=0}^\infty a_n x^n$ exists. In particular, the series $\sigma_k$ is Abel summable \cite{Varadarajan,Knopp}: if we replace $x=e^{-y}$ in the series  $$1^kx-2^kx^2+3^kx^3-\cdots=\sum_{n=0}^\infty (-1)^n(n+1)^k e^{-(n+1)y}=(-1)^k \frac{d^k}{dy^k}\left(\frac{1}{1+e^y}\right),$$ we can use the expansion (\ref{Last formula})  and take $y\rightarrow0^+$ to find the value $A(\sigma_k)=\frac{2^{k+1}-1}{k+1}B_{k+1}$, $k\geq1$. It is also clear that $A(\sigma_0)=\frac{1}{2}$. In the same way \begin{equation}\label{Abel sum of 1k-3k+5k+...}A(1^k-3^k+5^k-7^k+\cdots)=(-1)^{\lfloor k/2\rfloor}E_{k}/2,
\end{equation} where $\lfloor ,\rfloor$ denotes the floor function. Indeed, setting $x=e^{-2y}$ in the generating series $\sum_{n=0}^\infty (-1)^n (2n+1)^kx^n=(-1)^ke^y\frac{d^k}{dy^k}\left(\frac{1}{e^y+e^{-y}}\right)$, we can let $y\rightarrow0^+$ and then the formula follows from equation (\ref{E_n def}).

Is it possible to attribute a sum to a divergent series in a way compatible with the usual rules of calculus? For Euler the answer was positive! This is evidenced in his work  \textit{De seriebus divergentibus} \cite{Euler} on the \textit{Wallis series} $\sum_{n=0}^{\infty}(-1)^nn!$, where he found the sum $\int_0^\infty \frac{e^{-t}}{1+t}dt\approx0.5963473625$ \cite{Varadarajan}. In the same spirit, this was the belief of Hardy as he exhibited in his book \textit{Divergent Series} \cite{Hardy}. Nowadays, the theory of summability attempts to answer this question. 

We denote by $\mathcal{D}$ the $\mathbb{C}-$vector space of complex sequences $(a_n)_{n\geq 0}$ and by $\mathcal{C}$ subspace of sequences such that $\lim_{n\rightarrow+\infty} a_0+\cdots+a_n$ exists. We can think of elements of $\mathcal{D}$ as formal numerical series $\sigma=\sum_{n=0}^\infty a_n$. The space $\mathcal{C}$ is the domain of the \textit{sum homomorphism} $S:\mathcal{C}\rightarrow\mathbb{C}$, which associates a series $\sigma$ to its sum $S(\sigma)$, i.e., the limit of its partial sums. From this point of view, a summability method is a map $S^*:\mathcal{C}'\rightarrow\mathbb{C}$ on some linear subspace $\mathcal{C}\subseteq\mathcal{C}'\subseteq\mathcal{D}$ such that the following rules are satisfied:

\begin{enumerate}\item[1.] \textbf{Regularity rule}: If $\sigma\in\mathcal{C}$, then $S^*(\sigma)=S(\sigma)$.
	
	\item[2.] \textbf{Translation rule}: $S^*\left(\sum_{n=0}^\infty a_n\right)=a_0+S^*\left(\sum_{n=1}^\infty a_n\right)$.
	
	\item[3.] \textbf{Linearity rule}: $S^*$ is a $\mathbb{C}-$linear map.
	
\end{enumerate}

Ces\`{a}ro and Abel summability are examples of summability methods satisfying such rules. It is worth noting that such axioms were implicitly used by Euler.

We will use the following fact: \begin{center}\textit{Assume that $S^\ast$ satisfies the above rules. Then if it sums a series, the value we find through the rules is the value $S^\ast$ assigns to the series.}\end{center}  As a first example, we consider $F=1+1+2+3+5+\cdots=\sum_{n=0}^\infty F_n$, the series of Fibonacci numbers: if $S^*$ sums $F$, then $S^*({F})=2+S^*\left(\sum_{n=2}^\infty F_n\right)=2+S^*\left(\sum_{n=2}^\infty F_{n-1}+F_{n-2}\right)=2+(S^*({F})-1)+S^*({F})$ and thus $S^*({F})=-1$. As second example, we take the geometric series $s_z=1+z+z^2+z^3+\cdots$. We have $S^*(s_z)=\frac{1}{1-z}$, $z\neq1$, since $S^*(s_z)=1+zS^*(s_z)$. In particular, for $z=-1$ we recover the usual value $S^*(\sigma_0)=S^*(1-1+1-1+\cdots)=\frac{1}{2}$. The same conclusion is true for any positive integer $k$. This is remarked by Knopp \cite[p.\ 479]{Knopp}, but it is not proved there. We include a simple proof using induction on $k$.

\begin{proposition}\label{Main example} Let $S^*$ be a summability method satisfying rules 2 and 3. If $S^\ast$ sums the series $\sigma_k$ for all integers $k\geq0$, then $S^*(\sigma_k)=\frac{2^{k+1}-1}{k+1}B_{k+1}$, for all $k\geq1$.
\end{proposition}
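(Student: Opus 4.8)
The plan is to push $\sigma_k$ through rules~2 and~3 and to show that the numbers $S^*(\sigma_k)$, $k\ge 1$, satisfy the same recurrence~(\ref{Prop B_n for 1^k-2^k+...}) that defines $\frac{2^{k+1}-1}{k+1}B_{k+1}$; an induction on $k$ then finishes the proof. I may take as already established that $S^*(\sigma_0)=\frac12$, since this was obtained earlier from the geometric series using only rules~2 and~3.

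First I would peel off the initial term of $\sigma_k=\sum_{n=0}^\infty(-1)^n(n+1)^k$ with the translation rule, getting $S^*(\sigma_k)=1+S^*\bigl(\sum_{n=1}^\infty(-1)^n(n+1)^k\bigr)$. The shifted series $\sum_{n=1}^\infty(-1)^n(n+1)^k$ has $m$-th term $(-1)^{m+1}(m+2)^k$, and the binomial theorem $(m+2)^k=\sum_{l=0}^{k}\binom{k}{l}(m+1)^l$ identifies it, coefficient by coefficient, with the element $-\sum_{l=0}^{k}\binom{k}{l}\sigma_l$ of $\mathcal D$. Because $S^*$ sums each $\sigma_l$ by hypothesis, this finite linear combination lies in the domain of $S^*$, so the linearity rule gives $S^*\bigl(\sum_{n=1}^\infty(-1)^n(n+1)^k\bigr)=-\sum_{l=0}^{k}\binom{k}{l}S^*(\sigma_l)$. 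Substituting and transferring the $l=k$ term to the left yields, for every $k\ge1$,
\[
2\,S^*(\sigma_k)=1-\sum_{l=0}^{k-1}\binom{k}{l}S^*(\sigma_l),
\]
and replacing $S^*(\sigma_0)$ by $\frac12$ turns this into $S^*(\sigma_k)+\sum_{l=1}^{k}\binom{k}{l}S^*(\sigma_l)=\frac12$, which is exactly~(\ref{Prop B_n for 1^k-2^k+...}) read with $S^*(\sigma_l)$ in the role of $\frac{2^{l+1}-1}{l+1}B_{l+1}$.

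To conclude I would note that this recurrence determines its solution uniquely: solving for the top term gives $2\,S^*(\sigma_k)=\frac12-\sum_{l=1}^{k-1}\binom{k}{l}S^*(\sigma_l)$, so $S^*(\sigma_1)=\frac14=\frac{2^2-1}{2}B_2$ with nothing to assume, and for $k\ge2$ the numbers $S^*(\sigma_1),\dots,S^*(\sigma_{k-1})$ force the value of $S^*(\sigma_k)$. Since the sequence $\bigl(\frac{2^{k+1}-1}{k+1}B_{k+1}\bigr)_{k\ge1}$ obeys the very same recurrence by~(\ref{Prop B_n for 1^k-2^k+...}), induction on $k$ gives $S^*(\sigma_k)=\frac{2^{k+1}-1}{k+1}B_{k+1}$ for all $k\ge1$.

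The step I expect to require the most care is the middle one: the termwise identity $\sum_{n\ge1}(-1)^n(n+1)^k=-\sum_{l=0}^{k}\binom{k}{l}\sigma_l$ in $\mathcal D$ together with the appeal to linearity. This is exactly where one uses that $S^*$ is assumed to sum \emph{all} of the $\sigma_l$ and not merely $\sigma_k$, so that the linear combination actually sits in the domain of $S^*$; the remaining manipulations are just bookkeeping with rules~2 and~3 and the known value $S^*(\sigma_0)=\frac12$.
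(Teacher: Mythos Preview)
Your proof is correct and follows essentially the same route as the paper: translate $\sigma_k$ by one term, expand $(m+2)^k$ with the binomial theorem to write the shifted series as $-\sum_{l=0}^{k}\binom{k}{l}\sigma_l$ in $\mathcal{D}$, apply linearity, and conclude by induction using recurrence~(\ref{Prop B_n for 1^k-2^k+...}). The only cosmetic difference is that the paper treats the base case $k=1$ by the ad hoc identity $\sigma_1+(\text{shifted }\sigma_1)=\sigma_0$, whereas you extract $S^*(\sigma_1)=\tfrac14$ directly from your general recurrence at $k=1$; both are fine.
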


\begin{proof}For $k=1$ we note that  
	$$(1-2+3-4+5-6+\cdots)+(0+1-2+3-4+5-\cdots)=1-1+1-1+1-1+\cdots.$$  Then by rules 2 and 3, $S^*(\sigma_0)=S^*(\sigma_1)+S^*(\sigma_1)=2S^*(\sigma_1)$ and $S^*(\sigma_1)=\frac{1}{2}S^*(\sigma_0)=\frac{1}{4}$. Now we assume the formula holds for $S^*(\sigma_1),...,S^*(\sigma_{k-1})$, $k\geq 2$. By the binomial theorem, we see that 
	$$S^*(\sigma_k)=S^*\left(1-\sum_{n=2}^{\infty}(-1)^{n}(1+(n-1))^k\right)=\frac{1}{2}-\sum_{j=1}^k {k\choose j} S^*(\sigma_j).$$ Using the induction hypothesis and equation (\ref{Prop B_n for 1^k-2^k+...}), we conclude that the formula is valid for $k$. The principle of induction allows us to conclude the proof.
\end{proof}

The previous reasoning provides another way to prove recursion (\ref{Prop B_n for 1^k-2^k+...}): Take $S^*=A$ as Abel summability. Since $A$ satisfies rules 2 and 3, we can replace the value $A(\sigma_k)=\frac{2^{k+1}-1}{k+1}B_{k+1}$ in the recursion obtained in the previous proof. In fact, we can easily generalize equation (\ref{Prop B_n for 1^k-2^k+...}) using the same type of argument.

\begin{proposition}\label{Main result}Let $a$ and $k$ be positive integers. Then the Bernoulli numbers satisfy the recursion formula
	$$\frac{2^{k+1}-1}{k+1}B_{k+1}=\sum_{n=0}^{a-1} (-1)^n(n+1)^k+\frac{(-1)^a}{2}a^k+(-1)^a\sum_{j=1}^{k}{k\choose  j} a^{k-j}\frac{2^{j+1}-1}{j+1}B_{j+1}.$$
\end{proposition}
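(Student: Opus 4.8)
The plan is to run the argument of Proposition~\ref{Main example} with $S^\ast = A$, Abel summability, except that I peel off the first $a$ terms of $\sigma_k$ rather than a single term. Recall from the paragraph preceding Proposition~\ref{Main example} that $A$ obeys rules~2 and~3, that $A(\sigma_0) = \frac12$, and that $A(\sigma_j) = \frac{2^{j+1}-1}{j+1}B_{j+1}$ for every $j \geq 1$; in particular $A$ sums $\sigma_j$ for each $j = 0,1,\dots,k$. Applying the translation rule $a$ times, one gets
$$A(\sigma_k) = \sum_{n=0}^{a-1}(-1)^n(n+1)^k + A\!\left(\sum_{n=a}^{\infty}(-1)^n(n+1)^k\right),$$
where propagating summability through the intermediate tails is legitimate because consecutive tails differ by exactly one term, so rule~2 carries summability from the level-$0$ series (which $A$ sums) up to level $a$ and chains the corresponding equalities.

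Next I would re-index the tail by $n = m+a$ and expand $(m+1+a)^k$ by the binomial theorem. The $m$-th term of $\sum_{n=a}^{\infty}(-1)^n(n+1)^k$ is
$$(-1)^{m+a}(m+1+a)^k = (-1)^a \sum_{j=0}^{k}{k\choose j}a^{k-j}(-1)^m(m+1)^j,$$
which is precisely the $m$-th term of the finite linear combination $(-1)^a\sum_{j=0}^{k}{k\choose j}a^{k-j}\sigma_j$. Hence these two formal series are equal term by term. Since $A$ is linear and sums each $\sigma_j$ with $0\leq j\leq k$, it sums this combination, and after isolating the $j=0$ contribution $(-1)^a a^k A(\sigma_0) = \frac{(-1)^a}{2}a^k$ and inserting the known values $A(\sigma_j) = \frac{2^{j+1}-1}{j+1}B_{j+1}$ for $j\geq 1$, I obtain
$$A\!\left(\sum_{n=a}^{\infty}(-1)^n(n+1)^k\right) = \frac{(-1)^a}{2}a^k + (-1)^a\sum_{j=1}^{k}{k\choose j}a^{k-j}\frac{2^{j+1}-1}{j+1}B_{j+1}.$$

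Substituting this into the first display and using $A(\sigma_k) = \frac{2^{k+1}-1}{k+1}B_{k+1}$ gives the claimed recursion; note that for $a=1$ it collapses to~(\ref{Prop B_n for 1^k-2^k+...}). The only points demanding care are in the first paragraph's bookkeeping — checking that rule~2, which strips one term at a time, may be iterated, and that the re-indexed tail is genuinely the term-by-term equal of the combination $(-1)^a\sum_j {k\choose j}a^{k-j}\sigma_j$ — but neither is a real obstacle; the rest is the binomial theorem and linearity. One could equally carry out the whole computation with the abstract $S^\ast$ of Proposition~\ref{Main example} (summability of each $\sigma_j$, $0\le j\le k$, being the only hypothesis actually used), or deduce the formula from Proposition~\ref{Main example} by induction on $a$.
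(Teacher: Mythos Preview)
Your argument is correct and coincides with the paper's own proof: both peel off the first $a$ terms of $\sigma_k$ via iterated translation, expand the tail with the binomial theorem, and invoke linearity together with the known Abel sums $A(\sigma_j)$. The only cosmetic difference is that the paper first scales by $a^{-k}$, working with $a^{-k}\sigma_k$ and expanding $\bigl((m+1)/a+1\bigr)^k$, whereas you work with $\sigma_k$ directly and expand $(m+1+a)^k$; after multiplying through by $a^k$ the two computations are identical.
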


\begin{proof}The divergent series $a^{-k}\sigma_k=\sum_{n=0}^\infty (-1)^n\left(\frac{n+1}{a}\right)^k$ is Abel summable and $A(a^{-k}\sigma_k)=\frac{1}{a^k}\frac{2^{k+1}-1}{k+1}B_{k+1}$. The formula follows from the binomial theorem and rules 2 and 3 since \begin{align*}
	A\left(a^{-k}\sigma_k\right)&=\frac{1}{a^k}\sum_{n=0}^{a-1}(-1)^n(n+1)^k+A\left(\sum_{n=a}^\infty (-1)^n\left(\frac{n-a+1}{a}+1\right)^k\right)\\
	&=\frac{1}{a^k}\sum_{n=0}^{a-1}(-1)^n(n+1)^k+(-1)^a A\left(\sum_{n=0}^\infty (-1)^n\sum_{j=0}^k {k\choose j}\frac{(n+1)^j}{a^j}\right).
	\end{align*}
\end{proof}

Formula (\ref{Prop B_n for 1^k-2^k+...}) corresponds to the case $a=1$ of Proposition \ref{Main result}. The formula above is simple in the sense that it can be deduced directly from (\ref{Last formula}) by equating corresponding coefficients of $z^k$ in the identity $\sum_{n=0}^{a-1} (-1)^ne^{(n+1)z}+(-1)^ae^{az}-(-1)^a\frac{e^{az}}{e^z+1}=1-\frac{1}{1+e^z}$.

We can go further and recover the usual formulas to determine the Bernoulli numbers in terms of Euler numbers and vice versa.

\begin{proposition}\label{Bernoulli and Euler} The Bernoulli and Euler numbers are related by the formulas \begin{equation}\label{First relation Bn En}\sum_{j=1}^k {k\choose j} 2^j \frac{2^{j+1}-1}{j+1}B_{j+1}=\frac{1}{2}-\frac{(-1)^{\lfloor k/2\rfloor} E_{k}}{2},\end{equation}
	
	\begin{equation}\label{Bernoulli in terms of Eulers}2^{k+1}\frac{2^{k+1}-1}{k+1}B_{k+1}=\sum_{j=0}^{k}{k\choose j}(-1)^{\lfloor j/2\rfloor}E_j,\end{equation} valid for all integers $k\geq1$.
\end{proposition}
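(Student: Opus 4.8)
The plan is to reuse the divergent‑series machinery of Propositions~\ref{Main example} and~\ref{Main result}, taking $S^*=A$ to be Abel summability, and to compare $\sigma_k=\sum_{n\geq0}(-1)^n(n+1)^k$ with its ``odd'' companion $\tau_k:=\sum_{n\geq0}(-1)^n(2n+1)^k$, whose Abel sum $A(\tau_k)=(-1)^{\lfloor k/2\rfloor}E_k/2$ is recorded in~(\ref{Abel sum of 1k-3k+5k+...}). Every numerical series appearing below, namely $\sigma_j$, $\tau_j$ and finite linear combinations of them, is Abel summable, so rules~2 and~3 apply freely. The point to keep in mind is that $\sigma_k$ cannot simply be split into its odd‑index and even‑index pieces, since $1^k+3^k+5^k+\cdots$ and $2^k+4^k+\cdots$ are not Abel summable; the passage between the two kinds of alternating power sums must instead go through the binomial theorem.

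For~(\ref{First relation Bn En}) I would introduce the shifted series
\[
\rho_k:=\sum_{n\geq0}(-1)^n(2n+3)^k=3^k-5^k+7^k-\cdots
\]
and compute $A(\rho_k)$ in two ways. First, since $2n+3=2(n+1)+1$, the binomial theorem gives the term‑by‑term identity $\rho_k=\sum_{j=0}^k\binom{k}{j}2^j\sigma_j$, so by linearity and the values $A(\sigma_0)=\tfrac12$, $A(\sigma_j)=\tfrac{2^{j+1}-1}{j+1}B_{j+1}$ ($j\geq1$),
\[
A(\rho_k)=\tfrac12+\sum_{j=1}^k\binom{k}{j}2^j\,\tfrac{2^{j+1}-1}{j+1}B_{j+1}.
\]
Second, the translation rule applied to $\tau_k$, which strips off the initial term $1^k$ and leaves $-\rho_k$ after reindexing $n\mapsto n+1$, gives $A(\tau_k)=1-A(\rho_k)$, hence $A(\rho_k)=1-(-1)^{\lfloor k/2\rfloor}E_k/2$ by~(\ref{Abel sum of 1k-3k+5k+...}). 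Equating the two expressions for $A(\rho_k)$ and subtracting $\tfrac12$ gives~(\ref{First relation Bn En}).

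For~(\ref{Bernoulli in terms of Eulers}) I would expand in the opposite direction: from $2^k\sigma_k=\sum_{n\geq0}(-1)^n(2n+2)^k$ and $2n+2=(2n+1)+1$, the binomial theorem gives the term‑by‑term identity $2^k\sigma_k=\sum_{j=0}^k\binom{k}{j}\tau_j$. Applying $A$, using $A(\sigma_k)=\tfrac{2^{k+1}-1}{k+1}B_{k+1}$ for $k\geq1$ and~(\ref{Abel sum of 1k-3k+5k+...}), yields
\[
2^k\,\tfrac{2^{k+1}-1}{k+1}B_{k+1}=\tfrac12\sum_{j=0}^k\binom{k}{j}(-1)^{\lfloor j/2\rfloor}E_j,
\]
and multiplying by $2$ gives~(\ref{Bernoulli in terms of Eulers}).

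No step is a genuine obstacle; the only care needed is (i) to confirm that each manipulated series is Abel summable, so that linearity may be used on the finite binomial sums term by term, and (ii) to get the reindexing in the translation step right, namely that removing $1^k$ from $\tau_k$ leaves exactly $-\rho_k$. As in Proposition~\ref{Main example}, the argument could just as well be phrased for an abstract $S^*$ obeying rules~2 and~3 that sums all the series in sight; the computation is unchanged.
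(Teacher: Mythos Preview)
Your proposal is correct and follows essentially the same approach as the paper. For~(\ref{First relation Bn En}) the paper also combines the translation rule with the binomial expansion to rewrite $A(\tau_k)$ in terms of the $A(\sigma_j)$'s, arriving at $A(\tau_k)=\tfrac12-\sum_{j=1}^k\binom{k}{j}2^j\tfrac{2^{j+1}-1}{j+1}B_{j+1}$ directly rather than via your auxiliary $\rho_k$, but this is merely a notational difference since $A(\tau_k)=1-A(\rho_k)$; for~(\ref{Bernoulli in terms of Eulers}) the paper's computation, writing $2n=(2n-1)+1$ with $n\geq1$, is literally your expansion $2n+2=(2n+1)+1$ with $n\geq0$ after a shift of index.
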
	

\begin{proof}To prove (\ref{First relation Bn En}), we first calculate the Abel sum of $1^k-3^k+5^k-7^k+\cdots$ using equation (\ref{Abel sum of 1k-3k+5k+...}). Then we use the binomial theorem and rules 2 and 3 to obtain $$A\left(\sum_{n=0}^\infty (-1)^{n}(2n+1)^k\right)=\frac{1}{2}-\sum_{j=1}^k {k\choose j} 2^j \frac{2^{j+1}-1}{j+1}B_{j+1}.$$ Equating both results we get (\ref{First relation Bn En}). Similarly, for (\ref{Bernoulli in terms of Eulers}) it is enough to consider the series $2^k\sigma_k=2^k-4^k+6^k-8^k+\cdots$ and the relation $$A(2^k\sigma_k)=A\left(\sum_{n=1}^\infty (-1)^{n+1}(2n-1+1)^k\right)=\sum_{j=0}^{k}{k\choose j} A\left(\sum_{n=1}^\infty (-1)^{n+1}(2n-1)^j\right).$$ 
\end{proof}

The formulas (\ref{First relation Bn En}) and (\ref{Bernoulli in terms of Eulers}) are of course elementary. They can be deduced by equating the coefficients of $z^k$ in $e^z\frac{-1}{1+e^{2z}}=\frac{-1}{e^z+e^{-z}}$, $e^z\frac{1}{e^{z}+e^{-z}}=\frac{1}{1+e^{-2z}}$, respectively.

We invite the reader to calculate the Abel sum of $\sum_{n=0}^\infty (-1)^{n}(an+q)^k$, $k\geq1$, $a,q\in \mathbb{R}$ and $a>0$ as we did here (setting $x=e^{-ay}$ in the generating series and using rules 2 and 3) to conclude that Bernoulli and Euler numbers also satisfy $$\frac{q^k}{2}-\sum_{j=1}^k {k\choose j}  q^{k-j}a^j \frac{2^{j+1}-1}{j+1}B_{j+1}=\frac{(-1)^k}{2}\sum_{j=0}^{k} {k\choose  j} \left(\frac{a}{2}-q\right)^{k-j}\left(\frac{a}{2}\right)^{j}(-1)^{\lfloor j/2\rfloor}E_{j}.$$ Unfortunately, this expression does not provide new information since it can be deduced directly from (\ref{Bernoulli in terms of Eulers}) by equating the corresponding coefficient of $q^{k-j}a^j$.

\

\begin{remark*}Not all series can be summed with methods satisfying rules 1 to 3. For instance, the sum $s$ of $1+1+1+\cdots$ must satisfy $s=1+s$ which is impossible for a finite value. Another example is the series $\sigma=\sum_{n=1}^\infty n$: if it would be summable for some $S^*$, then $$S^*(\sigma)-S^*(\sigma)=S^*(1+2+3+\cdots)-S^*(0+1+2+\cdots)=S^*(1+1+\cdots),$$ and $1+1+1+\cdots$ would be summable with sum equals to $0$. However, there are methods that assign the controversial value $-\frac{1}{12}$ to $\sigma$. For instance, interpreting $\sigma$ as the value of the analytic continuation of $\zeta(z)$ at $z=-1$. Another example is the \textit{constant of a series method} of Ramanujan \cite[p.\ 327, 346]{Ramanujan,Candel,Hardy}. Naturally, such methods can not satisfy the rules 1 to 3. It is curious that Ramanujan wrote \cite[p.\ 135]{Ramanujan} $$\begin{array}{rllllllllll}
	\sigma&=&1&+&2&+&3&+&4&+&\cdots,\\
	4\sigma&=& & &4&+ & & &8&+&\cdots.
	\end{array}$$ Subtracting both equations he found $-3\sigma=1-2+3-4+5-6+\cdots=\frac{1}{4},$ so again $\sigma=-\frac{1}{12}$, although this reasoning is not compatible with our approach.
\end{remark*}

\begin{remark*} All formulas we have obtained here are well-known, elementary and they admit direct proofs by using power series. Thus it is natural to wonder whether the method we used is widely applicable to more complicated recurrences or to general sequences of numbers. This might not be the case since we have used only linear recursions and the binomial theorem. However, this point of view gives a natural interpretation of formulas (\ref{First relation Bn En}), (\ref{Bernoulli in terms of Eulers}) and the one in Proposition \ref{Main result} in terms of the divergent series involved.
\end{remark*}

\end{document}